\documentclass[11pt]{article}%
\usepackage[letterpaper, portrait, margin=1in, headheight=0pt]{geometry}
\usepackage{mathtools}
\usepackage{enumerate}
\usepackage{amsmath,amsthm,amssymb,amsfonts}
\usepackage{enumitem}
\usepackage{mathrsfs}
\usepackage{tikz-cd}
\usepackage{multicol}
\setlength{\columnsep}{1.0cm}
\usepackage[title]{appendix}
\usetikzlibrary{quotes,angles}

\newcommand{\F}{\mathbb{F}}

\newcommand{\inv}{^{-1}}

\newcommand{\cH}{\mathcal{H}}

\newtheorem{thm}{Theorem}[subsection]
\newtheorem{lem}[thm]{Lemma}

\newtheorem{cor}[thm]{Corollary}

\theoremstyle{definition}

\theoremstyle{remark}

\title{Characterizing Nilpotent Associative Algebras by Their Multiplier}
\author{Erik Mainellis}
\date{}

\begin{document}

\maketitle

\begin{abstract}
    The paper concerns an analogue of the famous Schur multiplier in the context of associative algebras and a measure of how far its dimension is from being maximal. Applying a methodology from Lie theory, we characterize all finite-dimensional nilpotent associative algebras for which this measure is ten or less.
\end{abstract}

\section{Introduction}
Schur multipliers are, roughly, a way of viewing the second (co)homology group $\cH^2(L)$ of an algebraic object $L$ as the kernel of a stem extension of maximal dimension. This theory has been developed for Lie \cite{batten}, Leibniz \cite{mainellis batten}, diassociative \cite{mainellis batten di}, and triassociative \cite{mainellis batten tri} algebras. In particular, the work of \cite{mainellis batten di} initiates the study of multipliers for associative algebras as a consequence of the diassociative generalization. This theory is furthered in \cite{mainellis nilp mult}, where the author focuses specifically on multipliers of nilpotent diassociative algebras and also discusses the associative specialization of the results. These results concern dimension bounds on the multiplier that form a basis for the work of the present paper.

There has been great success in characterizing nilpotent Lie and Leibniz algebras by invariants related to the dimension of their multipliers (see \cite{arab, batten mult, edal mult new, edal mult, hardy, hardy stitz, shamsaki}). Generally, these arguments consider a measure of how far the dimension of the multiplier $M(L)$ is from being maximal and proceed to classify algebras based on this distance. For a Lie algebra $L$ of dimension $n$, the measure is $\frac{1}{2}n(n-1) - \dim M(L)$. For a Leibniz algebra, it is $n^2 - \dim M(L)$. Some variations have been considered.

The objective of the present paper is to classify nilpotent associative algebras based on a similar measure. In particular, we define $t(A) = n^2 - \dim M(A)$ for a nilpotent associative algebra $A$ of dimension $n$ and determine all $A$ such that $t(A) \leq 10$. Throughout, we work over the complex field and assume that all algebras are finite-dimensional. We denote by $A(n)$ the abelian algebra of dimension $n$ and by $A\ast B$ the central sum of associative algebras $A$ and $B$. We let $E(n)$ range over all extra special algebras of dimension $n$.%For example, $E(3)$ ranges over $J_1\ast J_1$, $J_2$, $\Gamma_2$, $H_2(\lambda)$ and $E(4)$ ranges over $J_1\ast J_1\ast J_1$, $J_2\ast J_1$, $\Gamma_2\ast J_1$, $H_2(\lambda)\ast J_1$, $J_3$, $\Gamma_3$.

\section{Preliminaries}

We refer the reader to the preliminaries of \cite{mainellis extra special} for the notions of \textit{subalgebra}, \textit{ideal}, \textit{central sum}, \textit{extension}, \textit{section}, \textit{central extension}, and \textit{stem extension} for associative algebras. Let $A$ be an associative algebra. We denote by $A'=AA$ the \textit{derived ideal} of $A$, the ideal generated by all products in $A$. We define the \textit{center} of $A$ in the Lie sense; it is the ideal consisting of all $z\in A$ such that $za = az = 0$ for all $a\in A$. Also in the Lie sense, $A$ is called \textit{abelian} if all products are zero (in other words, if $Z(A) = A$, or if $A'=0$). An algebra $A$ is called \textit{nilpotent} if there exists a natural number $n$ such that any product of $n$ elements in $A$ is zero.

Given an associative algebra $A$, a pair of associative algebras $(K,M)$ is called a \textit{defining pair} for $A$ if $A\cong K/M$ and $M\subseteq Z(K)\cap K'$. Equivalently, a defining pair describes a stem extension \[0\xrightarrow{} M\xrightarrow{} K\xrightarrow{\omega} A\xrightarrow{} 0\] where $M = \ker \omega$. We say that a defining pair $(K,M)$ is a \textit{maximal defining pair} if the dimension of $K$ is maximal. In this case, $K$ is called the \textit{cover} of $A$ and $M$ is called the \textit{multiplier} of $A$, denoted by $M(A)$. It is known that $M(A)\cong\cH^2(A,\F)$, the second cohomology group with coefficients in the base field $\F$, and that covers are unique (see \cite{mainellis batten di}). As in the Leibniz case, the dimension of the multiplier $M(A)$ for an associative algebra $A$ is bounded by $(\dim A)^2$. In \cite{mainellis batten tri}, the author gives a table comparing the Lie, Leibniz, associative, diassociative, and triassociative algebra cases. We thus define a measure \[t(A) = (\dim A)^2 - \dim M(A)\] of how far $\dim M(A)$ is from being maximal. It is clear that $\dim M(A)=(\dim A)^2$ if and only if $A$ is abelian, and so $t(A) = 0$ in this case.

The author has not been able to find a direct proof of the following Künneth-Loday style formula for associative algebras, but it is easily provable from first principles via the methodology of its Lie analogue (see Theorem 1 in \cite{batten mult}) with appropriate substitutions. We note that a Leibniz version of this formula is used in \cite{edal mult} for the effort of characterizing nilpotent Leibniz algebras by their multipliers. Given finite-dimensional associative algebras $A$ and $B$, \begin{align}\label{kunneth}
    \dim M(A\oplus B) = \dim M(A) + \dim M(B) + 2\dim(A/A'\otimes B/B').
\end{align}
This formula is useful for proving the following Lemma, on which we will rely heavily for the proof of the main result.

\begin{lem}\label{ideal equality}
Let $A$ be a nilpotent, finite-dimensional associative algebra such that $Z(A)\not\subseteq A'$. Then there exists a 1-dimensional ideal $Z$ in $A$ such that $A = I\oplus Z$ and $t(I) + 2\dim(I') = t(A)$ for an ideal $I$ in $A$.
\end{lem}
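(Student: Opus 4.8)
The plan is to extract the ideal $Z$ directly from the hypothesis, construct a complementary ideal $I$ containing $A'$, verify that the resulting splitting is a direct sum of \emph{algebras} (not merely of vector spaces), and then feed it into the Künneth--Loday formula (\ref{kunneth}). First I would use $Z(A)\not\subseteq A'$ to pick a central element $z\in Z(A)$ with $z\notin A'$ and set $Z=\spa\{z\}$. Since $z$ is central we have $zA=Az=0$, so $Z$ is a $1$-dimensional ideal; moreover $z^2=0$, so $Z$ is abelian with $Z'=0$ and $\dim M(Z)=1$.

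Next I would build $I$. The key observation is that any subspace of $A$ containing $A'$ is automatically an ideal, since $AI,IA\subseteq AA=A'\subseteq I$. Thus it suffices to find a hyperplane $I\supseteq A'$ that complements $Z$. Passing to the abelian quotient $A/A'$, the image $\bar z$ is nonzero because $z\notin A'$; I would choose any vector-space complement $\bar I$ of $\spa\{\bar z\}$ in $A/A'$ and let $I$ be its preimage under $A\to A/A'$. Then $I\supseteq A'$ is an ideal and $A=I\oplus Z$ as vector spaces, with the direct-sum check reducing to $z\notin A'$. Because $Z$ is central, all cross products $IZ=ZI$ vanish, so this is in fact a direct sum of algebras, and in particular $A'=I'$.

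Finally I would compute. Writing $m=\dim I$, so that $\dim A=m+1$, I apply (\ref{kunneth}) to the summands $I$ and $Z$. As $Z/Z'=Z$ is $1$-dimensional, $\dim(I/I'\otimes Z/Z')=\dim(I/I')=m-\dim I'$, whence $\dim M(A)=\dim M(I)+1+2(m-\dim I')$. Substituting this into $t(A)=(m+1)^2-\dim M(A)$ and simplifying, the linear terms in $m$ cancel against one another and I obtain $t(A)=m^2-\dim M(I)+2\dim I'=t(I)+2\dim I'$, as required.

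The only genuinely delicate point, and the step I expect to be the main obstacle, is verifying that $A=I\oplus Z$ is a direct sum of \emph{algebras} so that (\ref{kunneth}) is actually applicable; this rests entirely on the centrality of $z$, which forces both the cross products and $Z^2$ to vanish, together with the elementary fact that $I\supseteq A'$ guarantees $I$ is an ideal. Everything after that is routine dimension bookkeeping.
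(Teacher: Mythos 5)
Your proposal is correct and follows essentially the same route as the paper: choose a central $1$-dimensional $Z\not\subseteq A'$, take a complement $I\supseteq A'$, and apply the K\"unneth-style formula (\ref{kunneth}) to $A=I\oplus Z$; the arithmetic at the end matches the paper's computation exactly. Your added care in verifying that $I$ is an ideal (because $I\supseteq A'$) and that the splitting is a direct sum of algebras (because $Z$ is central) makes explicit what the paper leaves implicit, but it is not a different argument.
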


\begin{proof}
Under the given assumptions, there exists a 1-dimensional subspace $Z\subseteq Z(A)$ such that $Z\not\subseteq A'$. Let $I$ be the complement to $Z$ in $A$ such that $A=I\oplus Z$ and $A'\subseteq I$. We note that $\dim M(Z) = (1)^2 = 1$ since $Z$ is abelian and that $\dim(I/I'\otimes Z/Z') = \dim I - \dim(I')$ since $Z/Z' = Z$. By (\ref{kunneth}), we have \begin{align*}
    \dim M(I\oplus Z) = \dim M(I) + \dim M(Z) + 2\dim(I/I'\otimes Z/Z')
\end{align*} which yields \begin{align*}
    n^2 - t(A) &= (n-1)^2 - t(I) + 1 + 2(n-1-\dim(I'))
\end{align*} for $n=\dim A$. Simplifying this equation, we obtain $t(I) + 2\dim(I') = t(A)$.
\end{proof}

Let $A$ be a nilpotent, finite-dimensional associative algebra. The following dimension bounds on the multiplier $M(A)$ of $A$ were obtained in \cite{mainellis nilp mult} for the more general case of diassociative algebras. We state their associative versions here: \begin{align}\label{bound i}
    \dim M(A) + 1 \leq \dim M(A/Z) + 2\dim(A/A')
\end{align} for any 1-dimensional ideal $Z\subseteq Z(A)\cap A'$, and \begin{align}\label{bound iii}
    \dim M(A)\leq \dim M(A/A') + \dim(A')[2\dim(A/A') - 1].
\end{align}
We will apply inequality (\ref{bound i}) directly in the proof of the main theorem. From (\ref{bound iii}), we relate $t(A)$ to $\dim(A')$ in the following manner.

\begin{lem}\label{derived bound}
Let $A$ be a nilpotent, finite-dimensional associative algebra. Then \[t(A)\geq \dim(A')(\dim(A')+1).\]
\end{lem}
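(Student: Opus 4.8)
The plan is to derive the bound from inequality (\ref{bound iii}) by setting $d = \dim(A')$ and $q = \dim(A/A')$, then extracting a clean lower bound on $t(A)$ that is independent of $q$.

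First I would record the two basic facts relating these dimensions to $t(A)$. Writing $n = \dim A = d + q$, we have $\dim M(A) = n^2 - t(A)$ by definition. For the quotient $A/A'$, which is abelian of dimension $q$, the multiplier is maximal: $\dim M(A/A') = q^2$. Substituting both into (\ref{bound iii}) gives
\begin{align*}
    n^2 - t(A) \leq q^2 + d(2q - 1).
\end{align*}
Rearranging and using $n = d+q$ so that $n^2 = d^2 + 2dq + q^2$, the $q^2$ and $2dq$ terms should cancel against the right-hand side, leaving a bound of the form $t(A) \geq d^2 + d = d(d+1)$, which is exactly the claimed inequality.

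The one point requiring care, and the step I expect to be the main (though minor) obstacle, is handling the degenerate case $A' = 0$, i.e.\ when $A$ is abelian. In that case $d = 0$, the claimed bound reads $t(A) \geq 0$, and indeed $t(A) = 0$ as noted in the preliminaries, so the inequality holds with equality; I would check that (\ref{bound iii}) either applies directly or that this boundary case is subsumed. Otherwise the argument is a purely algebraic manipulation of (\ref{bound iii}) together with the observation that $A/A'$ is abelian, so the proof is short once the substitutions are carried out correctly.
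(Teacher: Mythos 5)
Your proposal is correct and follows essentially the same route as the paper: substitute $\dim M(A/A') = (\dim A - \dim A')^2$ into inequality (\ref{bound iii}) and simplify to obtain $t(A) \geq \dim(A')(\dim(A')+1)$. The algebra works out exactly as you predict (the $q^2$ and $2dq$ terms cancel), and the abelian case needs no special treatment since the bound reduces to $t(A)\geq 0$.
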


\begin{proof}
    Let $n=\dim A$ and $m=\dim(A')$. Since $A/A'$ is abelian, we have $\dim M(A/A') = (n-m)^2$. By (\ref{bound iii}), we compute \begin{align*}
        \dim M(A) &\leq (n-m)^2 + m(2(n-m) - 1) \\ &= n^2 - m^2 - m
    \end{align*} which yields $t(A) \geq n^2 - n^2 + m^2 + m = m(m+1)$.
\end{proof}

Finally, we say that an associative algebra $A$ is \textit{extra special} if $Z(A) = A'$ and $\dim(Z(A)) = 1$. In \cite{mainellis extra special}, the author obtained the classification of these algebras as well as of their multipliers. In particular, their structure is precisely the same as that of the Leibniz case (obtained in \cite{edal}), but their multipliers are different in a handful of cases. We state two theorems from \cite{mainellis extra special}.

\begin{thm}
Any extra special associative algebra is a central sum of the following five classes of extra special associative algebras:
\begin{enumerate}
    \item[i.] $J_1$ with basis $\{x,z\}$ and nonzero product $xx=z$;
    
    \item[ii.] $J_n$ for $n=2,3,\dots$, with basis $\{x_1,\dots,x_n,z\}$ and nonzero products \begin{align*}
        x_1x_2 = z, ~~~ x_2x_3 = z, ~~~ \cdots ~~~ x_{n-1}x_n = z;
    \end{align*}
    
    \item[iii.] $\Gamma_n$ for $n=2,3,\dots$, with basis $\{x_1,\dots,x_n,z\}$ and nonzero products \begin{align*}
        & x_nx_1 = z, ~~~ x_{n-1}x_2 = -z, ~~~ \cdots ~~~ x_ix_{n-i+1} = (-1)^{n-i+2}z, ~~~\cdots ~~~x_2x_{n-1} = (-1)^nz, \\ & x_nx_2 = z, ~~~ x_{n-1}x_3 = -z, ~~~ \cdots ~~~ x_ix_{n-i+2} = (-1)^{n-i+2}z, ~~~\cdots ~~~x_2x_{n} = (-1)^nz, \\ & x_1x_n = (-1)^{n+1}z;
    \end{align*}

    \item[iv.] $H_2(\lambda)$ with basis $\{x_1,x_2,z\}$ and nonzero products $x_1x_2 = z$, $x_2x_1 = \lambda z$ for $0\neq \lambda \neq 1$;

    \item[v.] $H_{2n}(\lambda)$ for $n=2,3,\dots$, with basis $\{x_1,\dots,x_{2n},z\}$ and nonzero products \begin{align*}
        & x_1x_{n+1} = z, ~~~ x_2x_{n+2}=z, ~~~ \cdots ~~~ x_nx_{2n} = z, \\ & x_{n+1}x_1 = \lambda z, ~~~ x_{n+2}x_2=\lambda z, ~~~ \cdots ~~~ x_{2n}x_n = \lambda z, \\ & x_{n+1}x_2 = z, ~~~ x_{n+2}x_3 = z, ~~~ \cdots ~~~ x_{2n-1}x_n = z,
    \end{align*} where $0\neq \lambda \neq (-1)^{n+1}$.
\end{enumerate} Here, $\lambda\in \F$ is determined up to replacement by $\lambda\inv$.
\end{thm}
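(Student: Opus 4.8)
The plan is to reduce the classification entirely to the theory of bilinear forms under congruence. Let $A$ be extra special, so that $A' = Z(A) = \langle z\rangle$ is one-dimensional. Since $z$ is central, every product of two elements lands in $\langle z\rangle$ while every product involving $z$ vanishes; choosing a vector-space complement $V$ with $A = V\oplus\langle z\rangle$, the multiplication is therefore encoded by a single bilinear form $\beta\colon V\times V\to\C$ via $xy=\beta(x,y)\,z$. Associativity is automatic, because both $(xy)w$ and $x(yw)$ lie in $\langle z\rangle$ and are hence annihilated, so any $\beta$ produces an associative algebra. The two defining conditions translate cleanly: $A'=\langle z\rangle$ says $\beta\neq 0$, and $Z(A)=\langle z\rangle$ says that the only $v\in V$ with $\beta(v,\cdot)=0=\beta(\cdot,v)$ is $v=0$, i.e. the left and right radicals of $\beta$ meet only in $0$. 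First I would verify that an isomorphism of two such algebras is exactly the datum of a congruence $\beta_2(Tx,Ty)=c\,\beta_1(x,y)$ of the forms up to a nonzero scalar $c$, where $c$ records the freedom $z\mapsto cz$. The classification problem thus becomes that of nonzero bilinear forms with trivial radical intersection, taken up to congruence and scaling.

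Next I would identify the algebraic central sum with the orthogonal direct sum of forms: amalgamating the centers of $A_1$ and $A_2$ over $\langle z\rangle$ corresponds precisely to the form $\beta_1\perp\beta_2$ on $V_1\oplus V_2$ with all cross terms zero. Because the radicals of an orthogonal sum split blockwise, the trivial-intersection condition holds for the whole form exactly when it holds for each summand; in particular no summand can be the zero form. Consequently an extra special algebra is indecomposable as a central sum if and only if its form is orthogonally indecomposable, and the theorem reduces to enumerating the admissible indecomposable forms.

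At this point I would invoke the congruence classification of complex bilinear forms (in the Riehm--Gabriel and Horn--Sergeichuk canonical form), governed by the asymmetry of $\beta$, the automorphism $\Phi$ determined by $\beta(x,y)=\beta(y,\Phi x)$ when $\beta$ is nondegenerate. The indecomposable canonical blocks come in three shapes: singular nilpotent blocks $J_k(0)$; blocks $\Gamma_k$ whose asymmetry is the single self-reciprocal eigenvalue $(-1)^{k+1}$; and blocks $H_{2k}(\mu)=\left(\begin{smallmatrix}0 & I_k\\ J_k(\mu) & 0\end{smallmatrix}\right)$, with $J_k(\mu)$ the $k\times k$ Jordan block, whose asymmetry carries the reciprocal pair $\{\mu,\mu^{-1}\}$ and which requires $0\neq\mu\neq(-1)^{k+1}$. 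Writing out $xy=\beta(x,y)z$ for each block recovers exactly the five listed families: $\Gamma_1$ is the product $xx=z$, the algebra called $J_1$; the $J_k(0)$ with $k\geq 2$ give the chains $x_1x_2=\cdots=x_{k-1}x_k=z$ of $J_k$; the $\Gamma_k$ with $k\geq 2$ give $\Gamma_k$; and the $H_{2k}(\mu)$ give $H_2(\lambda)$ and $H_{2k}(\lambda)$. The radical condition deletes exactly one block, the $1\times 1$ zero form $J_1(0)=[0]$, which is the only indecomposable with nontrivial radical intersection and hence the only one that fails to be extra special.

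Finally I would account for the scaling freedom. Over $\C$ every nonzero scalar is a square, so rescaling never moves a $\Gamma_k$ out of its family and never alters the asymmetry, hence never changes $\mu$; the one genuine identification it produces is $H_{2k}(\mu)\cong H_{2k}(\mu^{-1})$, obtained by combining the congruence that swaps the two off-diagonal blocks with the rescaling $z\mapsto\mu^{-1}z$ (one checks this directly for $k=1$, and it persists for general $k$). This yields the stated indeterminacy that $\lambda$ is determined only up to $\lambda^{-1}$, and, together with the uniqueness of the canonical decomposition, completes the classification. The main obstacle is none of these translations but the congruence classification of bilinear forms itself, which is the representation-theoretic heart of the argument; if one prefers not to cite it, it must be reproved, most naturally by following the inductive argument of the Leibniz analogue in \cite{edal}, to which the present situation is identical at the level of the underlying form.
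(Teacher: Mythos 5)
Your proposal is correct and follows essentially the same route as the source: the paper states this theorem without proof, importing it from \cite{mainellis extra special}, where (as in the Leibniz analogue \cite{edal} that the paper explicitly says it mirrors) the classification is obtained by encoding the multiplication as a bilinear form on $A/Z(A)$, matching central sums with orthogonal direct sums, and invoking the canonical indecomposable blocks $J_k(0)$, $\Gamma_k$, $H_{2k}(\mu)$ for complex matrix congruence. Your translation of the extra special conditions (nonzero form, trivial intersection of left and right radicals, exclusion of the $1\times 1$ zero block) and of the $\lambda\mapsto\lambda^{-1}$ identification is accurate.
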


\begin{thm}
    Let $A$ be an extra special associative algebra. Then $\dim M(A) = (\dim(A) - 1)^2-1$ with the exception of $A=J_1$. In particular, $\dim M(J_1) = 1$.
\end{thm}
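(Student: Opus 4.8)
The plan is to compute the multiplier directly as $M(A)\cong\cH^2(A,\F)$, the second cohomology of $A$ with coefficients in the trivial bimodule $\F$, which classifies the central extensions of $A$ by $\F$. Concretely, a $2$-cocycle is a bilinear map $f\colon A\x A\to\F$ satisfying $f(ab,c)=f(a,bc)$ (this is exactly the associativity condition for the product $(a,\lambda)(b,\mu)=(ab,f(a,b))$ on $A\oplus\F$), and the coboundaries are exactly the maps $f(a,b)=g(ab)$ for $g\in A^{\ast}$. Writing $\cZ^2$ and $\cB^2$ for these two spaces, we have $\dim M(A)=\dim\cZ^2-\dim\cB^2$. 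Since $g\mapsto(a,b\mapsto g(ab))$ has kernel $\{g:g|_{A'}=0\}$, its image has dimension $\dim(A')$, so $\dim\cB^2=\dim(A')=1$ for an extra special $A$. The whole problem thus reduces to showing $\dim\cZ^2=(\dim A-1)^2$ in general and $\dim\cZ^2=2$ for $J_1$. A pleasant feature of this route is that it never invokes the classification of the previous theorem, only the two defining properties of ``extra special.''

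Next I would set up coordinates adapted to those properties. Fix $z$ spanning $A'=Z(A)$ and a complement $V=\spa\{x_1,\dots,x_{n-1}\}$, so that $x_ix_j=c_{ij}z$ while $x_iz=zx_i=zz=0$; let $C=(c_{ij})$ be the resulting structure matrix. The hypothesis $\dim(A')=1$ gives $C\neq 0$, and the hypothesis $Z(A)=A'$ translates precisely into the statement that the left and right null spaces of $C$ meet only in $0$. Parametrizing a cocycle by $s_{ij}=f(x_i,x_j)$, $p_k=f(z,x_k)$, $q_i=f(x_i,z)$, and $r=f(z,z)$, I would evaluate the cocycle identity on all basis triples: the triples lying in $V$ yield the cross constraint $c_{ij}p_k=c_{jk}q_i$ for all $i,j,k$, while the triples involving $z$ force $r=0$ and impose no condition on the $s_{ij}$. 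Hence $\dim\cZ^2=(\dim A-1)^2+\dim\{(p,q):c_{ij}p_k=c_{jk}q_i\ \forall i,j,k\}$, and everything comes down to the dimension of this cross-term solution space.

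The heart of the argument, and the step I expect to be the main obstacle, is showing that this solution space is $0$ as soon as $\dim A\ge 3$. I would contract $c_{ij}p_k=c_{jk}q_i$ against an arbitrary vector $a$ to obtain $(p\cdot a)\,C=q\,(Ca)^{\!\top}$ for all $a$; feeding in $a$ in the right null space of $C$ shows that $p$ lies in the row space of $C$, and dually $q$ lies in its column space. If $p\neq0$ I would choose $a$ with $p\cdot a=1$, which forces $C=q\,v^{\!\top}$ with $v=Ca$, so that $C$ has rank one and the relation collapses to $v\,p^{\!\top}=q\,v^{\!\top}$. Now $Z(A)=A'$ forces the two null spaces of this rank-one $C$, namely $q^{\perp}$ and $v^{\perp}$, to meet only in $0$, so $\spa\{v,q\}$ must be all of $V$; as this span has dimension at most two, $\dim V\ge 3$ is already contradictory, while for $\dim V=2$ the independence of $v$ and $q$ is incompatible with $v\,p^{\!\top}=q\,v^{\!\top}$ unless $v=0$. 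Either way we reach a contradiction, so $p=0$, and then $C\neq0$ gives $q=0$. The subtle point is that $C$ need not be invertible, so the rank-one cases (such as $J_2$, and indeed every extra special algebra of dimension three) must be \emph{absorbed} by this argument rather than excluded; the analysis must run entirely off the weaker hypothesis that only the intersection of the two null spaces is trivial. This yields $\dim\cZ^2=(\dim A-1)^2$ and hence $\dim M(A)=(\dim A-1)^2-1$.

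Finally I would treat $J_1$ as the genuine exception. Here $\dim V=1$ and $C=(c)$ with $c\neq0$, so the cross constraint degenerates to the single equation $p_1=q_1$, whose solution space is one-dimensional rather than zero. This gives $\dim\cZ^2=1+1=2$ and therefore $\dim M(J_1)=2-1=1$, exactly the value by which $J_1$ fails the formula $(\dim A-1)^2-1=0$. The proof also makes transparent why $J_1$ is the \emph{only} exception: for every extra special algebra of dimension at least three the cross-term space vanishes, whereas in dimension two it is necessarily one-dimensional.
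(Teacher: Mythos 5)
Your argument is correct, but it is worth noting that this paper contains no proof of the statement at all: the theorem is imported verbatim from \cite{mainellis extra special}, where (as the surrounding text and the cover computations later in this paper indicate) the multiplier is obtained only \emph{after} the classification into the five families $J_1, J_n, \Gamma_n, H_2(\lambda), H_{2n}(\lambda)$ and their central sums, by writing down a generating set and a multiplication table for the cover of each family and reducing it with changes of basis, the associative identity, and the K\"unneth-type formula (\ref{kunneth}) for central sums. Your route is genuinely different: you compute $\cH^2(A,\F)=\cZ^2/\cB^2$ uniformly from the two defining properties $A'=Z(A)$ and $\dim Z(A)=1$, never invoking the classification. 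I checked the key steps and they hold: $\dim\cB^2=\dim(A')=1$; the cocycle identity on basis triples gives exactly $r=0$ and the cross constraint $c_{ij}p_k=c_{jk}q_i$, leaving the $(\dim A-1)^2$ values $s_{ij}$ free; and the contraction argument correctly handles singular $C$, since when $p\neq 0$ it forces $C=q\,v^{\top}$ of rank one, whereupon $Z(A)=A'$ demands $\spa\{q,v\}=V$ while $v\,p^{\top}=q\,v^{\top}$ forces $\spa\{v\}=\spa\{q\}$ (both sides being nonzero), a contradiction for every $\dim V\geq 2$; the $J_1$ case $\dim V=1$ is precisely where the cross-term space is one-dimensional instead of zero. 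The trade-off is real: the classification-based route of \cite{mainellis extra special} also produces the explicit covers and structure constants that the rest of this paper needs, whereas your computation is shorter, works over any field where the multiplier--cohomology identification holds, and makes it structurally transparent why $J_1$ is the unique exception rather than leaving that to a case check.
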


\begin{cor}
Let $A$ be an extra special associative algebra. Then $t(A) = 2\dim A$ with the exception of $A=J_1$. In particular, $t(J_1) = 3$.
\end{cor}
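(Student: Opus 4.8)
The plan is to derive the corollary directly from the immediately preceding theorem, which computes $\dim M(A)$ for all extra special associative algebras, by substituting into the definition $t(A) = (\dim A)^2 - \dim M(A)$. First I would handle the generic case: assuming $A \neq J_1$, the theorem gives $\dim M(A) = (\dim A - 1)^2 - 1$. Writing $d = \dim A$ for brevity, I would compute
\begin{align*}
    t(A) = d^2 - \big[(d-1)^2 - 1\big] = d^2 - (d^2 - 2d + 1) + 1 = 2d,
\end{align*}
which is exactly $t(A) = 2\dim A$ as claimed.

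Next I would dispose of the exceptional case $A = J_1$. Since $J_1$ has basis $\{x,z\}$, its dimension is $2$, and the theorem states $\dim M(J_1) = 1$. Substituting gives $t(J_1) = 2^2 - 1 = 3$, confirming the stated value. I would note that this is genuinely exceptional: plugging $d = 2$ into the generic formula would yield $2d = 4 \neq 3$, so $J_1$ must be treated separately, consistent with its being the exception in the preceding theorem.

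This corollary is essentially a one-line algebraic consequence of the theorem it follows, so there is no substantive mathematical obstacle; the entire content is the arithmetic substitution. The only point requiring any care is bookkeeping: making sure to read off $\dim A$ correctly from each algebra's basis (each class having a distinguished central generator $z$ in addition to the $x_i$) and to keep the $J_1$ exception aligned with the theorem's exception. Since the preceding theorem already packages all five families uniformly under the single formula $\dim M(A) = (\dim A - 1)^2 - 1$, no case-by-case analysis across the families $J_n$, $\Gamma_n$, $H_2(\lambda)$, and $H_{2n}(\lambda)$ is needed beyond the single split between $J_1$ and everything else.
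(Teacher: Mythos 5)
Your proposal is correct and follows exactly the same route as the paper: substitute the preceding theorem's value of $\dim M(A)$ into $t(A) = (\dim A)^2 - \dim M(A)$, getting $2\dim A$ in the generic case and $2^2 - 1 = 3$ for $J_1$. The arithmetic matches the paper's computation line for line, so there is nothing to add.
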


\begin{proof}
    For all extra special $A$ besides $J_1$, we compute \begin{align*}
        t(A) &= n^2 - \dim M(A) \\ &= n^2 - (n-1)^2 + 1 \\ &= 2n
    \end{align*} where $n = \dim A$. For $J_1$, we compute $t(J_1) = 2^2 - 1 = 3$.
\end{proof}

\section{The Main Result}

\begin{thm}
    Let $A$ be a complex nilpotent associative algebra. Then
    \begin{enumerate}
        \item[i.] $t(A) = 0$ if and only if $A$ is abelian;
        \item[ii.] there is no $A$ such that $t(A) = 1$ or $t(A)=2$;
        \item[iii.] $t(A) = 3$ if and only if $A = J_1$;
        \item[iv.] there is no $A$ such that $t(A) = 4$;
        \item[v.] $t(A) = 5$ if and only if $A=J_1\oplus A(1)$;
        \item[vi.] $t(A) = 6$ if and only if $A=E(3)$;
        \item[vii.] $t(A) = 7$ if and only if $A=J_1\oplus A(2)$;
        \item[viii.] $t(A) = 8$ if and only if $A= E(3)\oplus A(1)$, $E(4)$, or \begin{align*}
            C_3 = \langle x,z,z' ~:~ xx=z,~xz = zx = z'\rangle;
        \end{align*}
        \item[ix.] $t(A) = 9$ if and only if $A=J_1\oplus A(3)$;
        \item[x.] $t(A) = 10$ if and only if $A=E(3)\oplus A(2)$, $E(4)\oplus A(1)$, $E(5)$.
    \end{enumerate} Here, $A(n)$ denotes the abelian algebra of dimension $n$ and $E(n)$ ranges over all extra special algebras of dimension $n$.
\end{thm}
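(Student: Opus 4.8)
The plan is to reduce the problem to a finite list of indecomposable ``stem'' algebras and then reassemble. First I would strip off abelian direct summands: whenever $Z(A)\not\subseteq A'$, Lemma~\ref{ideal equality} gives a splitting $A=I\oplus Z$ with $\dim Z=1$ and $t(I)+2\dim(I')=t(A)$. Since adjoining an abelian summand leaves the derived ideal unchanged, iterating this (the dimension drops each step) until the complement $B$ satisfies $Z(B)\subseteq B'$ produces a decomposition
\[ A=B\oplus A(k), \qquad t(A)=t(B)+2k\dim(B'), \qquad \dim(B')=\dim(A'). \]
Thus it suffices to classify the stem algebras $B$ (those with $Z(B)\subseteq B'$) with $t(B)\le 10$ and then enumerate all $A=B\oplus A(k)$ satisfying $t(A)\le 10$. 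By Lemma~\ref{derived bound}, $t(A)\le 10$ forces $\dim(A')\bigl(\dim(A')+1\bigr)\le 10$, hence $\dim(A')\le 2$, so only $\dim(B')\in\{0,1,2\}$ occur.

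The cases $\dim(B')=0,1$ are immediate. If $\dim(B')=0$ then $B$ is abelian and $Z(B)\subseteq B'=0$ forces $B=0$, contributing the abelian algebras $A=A(k)$ with $t=0$. If $\dim(B')=1$ then $B$ is nilpotent and nonzero, so $Z(B)\neq 0$, and $Z(B)\subseteq B'$ with $\dim(B')=1$ gives $Z(B)=B'$ of dimension one; that is, $B$ is extra special. The Corollary then yields $t(J_1)=3$ and $t(E(n))=2n$, so the stem algebras here with $t\le 10$ are exactly $J_1\ (t=3)$, $E(3)\ (t=6)$, $E(4)\ (t=8)$, and $E(5)\ (t=10)$.

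The case $\dim(B')=2$ is the heart of the argument. Rewriting inequality~(\ref{bound i}) for a $1$-dimensional $Z\subseteq Z(B)\cap B'$ gives $t(B)\ge t(B/Z)+2\dim(B')=t(B/Z)+4$, so $t(B)\le 10$ forces $t(B/Z)\le 6$. Since $Z\subseteq B'$, the quotient has $\dim((B/Z)')=1$; a nilpotent algebra with one-dimensional derived ideal has that ideal central, hence decomposes as an extra special algebra plus an abelian summand, and $t\le 6$ leaves only $B/Z\in\{J_1,\,J_1\oplus A(1),\,E(3)\}$. In particular $\dim(B/Z)\le 3$, so $\dim(B)\le 4$. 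When $\dim(B)=3$ the space $B/B'$ is one-dimensional, so $B$ is generated by a single element and is therefore commutative; nilpotency forces $B=\langle x,x^2,x^3\rangle$, which is exactly $C_3$, and a direct $2$-cocycle computation gives $\dim M(C_3)=1$, i.e. $t(C_3)=8$. When $\dim(B)=4$ the bound already yields $t(B)\ge 9$, and the main obstacle is to show $t(B)\ge 11$ for every such $B$, so that no algebra with $\dim(A')=2$ intrudes into cases (ix) or (x). I would dispatch this by a finite structural classification, separating the $2$-step case $B^3=0$ (where the product is a surjective bilinear map $B/B'\times B/B'\to B'$, classified up to $\mathrm{GL}(B/B')\times\mathrm{GL}(B')$) from the $3$-step case $B^3\neq 0$, $B^4=0$, and computing $\dim M(B)\le 5$ for each representative.

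Finally I would assemble the classification: every nilpotent $A$ with $t(A)\le 10$ is $B\oplus A(k)$ for $B\in\{0,J_1,E(3),E(4),E(5),C_3\}$, and enumerating $t(A)=t(B)+2k\dim(B')\le 10$ reproduces the list (i)--(x) precisely. The values $t=1,2,4$ are unattained because the smallest nonzero stem contribution is $t(J_1)=3$ while $\dim(B')=2$ already forces $t(B)\ge 6$. I expect the dimension-four subcase with $\dim(B')=2$ to be the principal difficulty: the abstract bounds only deliver $t(B)\ge 9$, so closing the gap to $11$ requires the explicit multiplier computations on the finitely many such algebras, and it is exactly this step that guarantees no spurious algebras appear alongside $J_1\oplus A(3)$ and the rank-one extra special families.
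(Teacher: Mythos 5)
Your proposal is correct in substance and relies on the same ingredients as the paper (Lemma \ref{ideal equality}, Lemma \ref{derived bound}, inequality (\ref{bound i}), the extra special classification, and explicit cover computations), but it organizes them differently. The paper runs an induction on the value of $t(A)$ from $0$ to $10$, re-invoking the splitting lemma and inequality (\ref{bound i}) inside each case and leaning on the previously settled values. You instead iterate Lemma \ref{ideal equality} once and for all to get a global decomposition $A=B\oplus A(k)$ with $Z(B)\subseteq B'$ and $t(A)=t(B)+2k\dim(B')$, classify the ``stem'' algebras $B$ with $t(B)\le 10$ by the size of $B'$, and reassemble at the end. This is a genuine streamlining: it makes the non-attainment of $t=1,2,4$ transparent (the minimal nonzero stem contribution is $t(J_1)=3$, and $\dim(B')=2$ already forces $t(B)\ge 6$), it collapses the paper's repeated ``there is no such $I$/$H$'' bookkeeping, and your observation that $\dim(B')=1$ together with $Z(B)\subseteq B'$ immediately yields extra special $B$ replaces several of the paper's subcases at once. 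Your reductions in the $\dim(B')=2$ branch (that $(B/Z)'$ is one-dimensional and central, that $t(B/Z)\le 6$ pins $B/Z$ down to $J_1$, $J_1\oplus A(1)$, or $E(3)$, and that $\dim B=3$ forces the one-generated algebra $C_3$) all check out against the paper's cases $t(A)=7,9,10$.

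The one substantive incompleteness is that the computational core is deferred rather than executed: the claims $\dim M(C_3)=1$ and $t(B)\ge 11$ for every four-dimensional stem algebra with two-dimensional derived ideal are precisely the explicit cocycle/cover calculations the paper carries out (finding $t=8$ for $C_3$, and $t=12$ or $13$ for the central extensions of $A(1)$ by $J_1\oplus A(1)$ and by $E(3)$). Your plan for them --- a finite classification of the relevant four-dimensional algebras, split by nilpotency class, followed by a multiplier computation for each representative --- is sound and matches what the paper actually does, but until those tables are produced the argument is a correct skeleton rather than a complete proof. Note also that the abstract bound only gives $t(B)\ge 9$ (resp.\ $\ge 10$) in this branch, so there is no way to avoid these computations; you have correctly identified them as the irreducible difficulty.
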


\begin{proof} Throughout this proof, let $A$ be a nilpotent associative algebra of finite dimension $n$ and denote $m=\dim(A')$.

\textbf{Case $t(A)<2$.} We have already mentioned that $t(A)=0$ if and only if $A$ is abelian. If $t(A) = 1$, then Lemma \ref{derived bound} yields $1\geq m(m+1)$, which implies that $\dim(A') = 0$. But this means that $A$ is abelian, or $t(A) = 0$, a contradiction. Thus, there is no $A$ such that $t(A) = 1$.

\textbf{Case $t(A)=2$.} If $t(A) = 2$, then $2\geq m(m+1)$ yields $m=0$ or $m=1$. Again, $m=0$ leads to a contradiction since $A$ cannot be abelian. We thus assume that $\dim(A') = 1$. Either $Z(A)\subseteq A'$ or not. If not, we invoke Lemma \ref{ideal equality}, which guarantees a 1-dimensional ideal $Z$ and an ideal $I$ such that $A=I\oplus Z$ and $t(I) + 2\dim(I') = t(A) = 2$. If $\dim(I') = 0$, then $t(I) = 2$. But this also means that $I$ is abelian, and so this is a contradiction. If $\dim(I') = 1$, then $t(I) =0$. But this means that $I$ is abelian, a contradiction with $\dim(I') = 1$. So $Z(A)\not\subseteq A'$ leads to nothing. If $Z(A)\subseteq A'$, let $Z$ be a 1-dimensional ideal contained in $Z(A)$ and denote $H=A/Z$. Since $Z\subseteq A'$, we have $\dim H - \dim(H') = \dim A - \dim(A')$. Our inequality (\ref{bound i}) yields \[\dim M(A) + 1 \leq \dim(A/Z) + 2\dim(A/A'),\] or $n^2 - t(A) + 1\leq (n-1)^2 - t(H) + 2(n-\dim(A'))$, which simplifies to \begin{align*}
    t(H) + 2\dim(A')\leq t(A).
\end{align*} This implies that $t(H) \leq 0$. So $t(H)=0$, and $H$ is abelian, meaning $H'=0$. This implies that $A'\subseteq Z$, which yields $A' = Z(A) = Z$; in other words, $A$ is extra special. However, there is no such algebra with $t(A) = 2$.

\textbf{Case $t(A) = 3$.} We again start with $3\geq m(m+1)$, which guarantees that $m=1$ since $m=0$ leads to a contradiction. If $Z(A)\not\subseteq A'$, we invoke Lemma \ref{ideal equality}; again, this ensures that $A=I\oplus Z$ with $\dim Z = 1$ and $t(I) + 2\dim(I') = t(A) = 3$. If $\dim(I') = 0$, then $I$ is abelian and $t(I)=3$, a contradiction. If $\dim(I') = 1$, then $t(I) = 1$. However, as seen in the previous cases, there is no such algebra $I$. If $Z(A)\subseteq A'$, we again let $Z$ be a 1-dimensional ideal in $Z(A)$ and denote $H=A/Z$. Our inequality (\ref{bound i}) yields $t(H)\leq 1$. There is no algebra $H$ such that $t(H) =1$, so let us consider the case $t(H)=0$. Here, $H$ must be abelian, and so $A$ is extra special. The only extra special algebra with $t(A) = 3$ is $J_1$.

\textbf{Case $t(A) = 4$.} Here, Lemma \ref{derived bound} yields $4\geq m(m+1)$ and so, again, $\dim(A')$ must be 1. If $Z(A)\not\subseteq A'$, then $A=I\oplus Z$ with $t(I) + 2\dim(I') = 4$ by Lemma \ref{ideal equality}. If $\dim(I') = 0$, then $I$ is abelian and $t(I) = 4$, a contradiction. If $\dim(I') = 1$, then $t(I)=2$, but there is no such algebra $I$. If $\dim(I') = 2$, then $t(I) = 0$, but this means that $I$ is abelian, a contradiction with $\dim(I')\neq 0$. Suppose $Z(A)\subseteq A'$. We again let $Z$ be a 1-dimensional ideal in $Z(A)$ and denote $H=A/Z$. This time, our inequality (\ref{bound i}) yields $t(H)\leq 2$. There is no algebra $H$ such that $t(H)=1$ or $t(H) = 2$. In the case of $t(H)= 0$, one may deduce that $A$ is extra special. However, there is no extra special algebra $A$ such that $t(A) = 4$.

\textbf{Case $t(A)=5$.} We begin with $5\geq m(m+1)$ and again make the deduction that $\dim(A')$ must be 1. For $Z(A)\not\subseteq A'$, we have $A=I\oplus Z$ with $t(I) + 2\dim(I') = 5$. The case of $\dim(I') = 0$ leads to a contradiction. The case of $\dim(I') = 2$ forces $t(I) = 1$, but there is no such $I$. The case of $\dim(I') = 1$, however, implies that $t(I) = 3$. There is one algebra in this case: $I = J_1$. Therefore, $A = J_1\oplus A(1)$. This is consistent with $t(A) = 5$ since \begin{align*}
    \dim M(J_1\oplus A(1)) &= \dim M(J_1) + \dim M(A(1)) + 2\dim(J_1/J_1'\otimes A(1)) = 1 + 1 + 2(1)(1)
\end{align*} by (\ref{kunneth}), and so $t(A) = 3^2 - 4$. Now suppose that $Z(A)\subseteq A'$. Let $Z$ be a 1-dimensional ideal in $Z(A)$ and $H=A/Z$. This time, our inequality (\ref{bound i}) yields $t(H)\leq 3$. From previous cases, there is nothing for $t(H)=1$ or $t(H) = 2$. For $t(H)=0$, $A$ must be extra special. However, there is no extra special algebra $A$ such that $t(A)=5$. For $t(H)=3$, the only possibility is $H=J_1$, and so $A$ must be a central extension of $Z$ by $J_1$. In other words, there is a central extension \[0\xrightarrow{} Z\xrightarrow{} A\xrightarrow{} J_1\xrightarrow{} 0\] such that $Z\subseteq A'$. A basis for $A$ is $\{x,z,z'\}$ where $\{z'\}$ is a basis for $Z$ and $A$ has multiplications $xx = z$ and $xz = zx = z'$. But we need $A'$ to be 1-dimensional, and so this is not possible here. This extension will reappear in the case of $t(A)=7$, where we will consider it in more depth.

\textbf{Case $t(A) = 6$.} By our inequality $6\geq m(m+1)$, we deduce that $\dim(A')$ must equal 1 or 2 since $A$ is not abelian. Suppose $Z(A)\not\subseteq A'$. We have $A=I\oplus Z$ with $t(I) + 2\dim(I') = 6$. The cases of $\dim(I') = 0$ and $\dim(I') = 3$ lead to a contradiction with $I$ being abelian. If $\dim(I') = 1$, then $t(I) = 4$. If $\dim(I') = 2$, then $t(I) = 2$. In both of these cases, there is no such $I$. Suppose now that $Z(A) \subseteq A'$. Let $Z$ be a 1-dimensional ideal in $Z(A)$ and $H=A/Z$. Our inequality (\ref{bound i}) becomes $t(H) + 2\dim(A')\leq 6$. \begin{enumerate}
    \item[i.] If $\dim(A') = 1$, then $t(H)\leq 4$. There is no algebra such that $t(H) = 1,2,4$. If $t(H)=3$, then $H$ must equal $J_1$, making $A$ a central extension of $Z$ by $J_1$. But this would again force $\dim(A') \neq 1$ since $Z$ is also contained in $A'$, a contradiction. If $t(H)=0$, we have abelian $H$ and extra special $A$. Since $t(A)=6= 2\dim(A)$, we have $\dim(A) = 3$, and so $A$ may be any of $J_1\ast J_1$, $J_2$, $\Gamma_2$, or $H_2(\lambda)$, where $0\neq \lambda\neq 1$. In other words, $A=E(3)$.
    \item[ii.] If $\dim(A') = 2$, then $t(H)\leq 2$. There is nothing for $1$ or 2. If $t(H) = 0$, then $A$ is extra special. However, we are assuming that $\dim(A')=2$, and so this is a contradiction.
\end{enumerate}

\textbf{Case $t(A) = 7$.} By our inequality $7\geq m(m+1)$, we again deduce that $\dim(A')$ must equal 1 or 2. If $Z(A)\not\subseteq A'$, we have $A=I\oplus Z$ with $t(I) + 2\dim(I') = 7$. The case of $\dim(I') = 0$ leads to a contradiction with $t(I) = 7$. If $\dim(I') = 1$, then $t(I) = 5$, which implies that $I=J_1\oplus A(1)$. In this case, we obtain $A= J_1\oplus A(2)$ since $Z\subseteq Z(A)$ is 1-dimensional. If $\dim(I') = 2$, then $t(I) = 3$, which implies that $I=J_1$. However, $\dim(J_1') = 1$, a contradiction. If $\dim(I') = 3$, then $t(I) = 1$; there is nothing here. Suppose now that $Z(A) \subseteq A'$. Let $Z$ be a 1-dimensional ideal in $Z(A)$ and $H=A/Z$. Our inequality (\ref{bound i}) becomes $t(H) + 2\dim(A')\leq 7$.
\begin{enumerate}
    \item[i.] If $\dim(A') = 1$, then $t(H)\leq 5$. There is nothing for $t(H)=1,2,4$. If $t(H)=0$, then $H$ is abelian and $A$ is extra special; however, there are no extra special algebras with $t(A) = 7$. If $t(H) = 3$, then $H=J_1$. But $A'$ is 1-dimensional, and so this is a contradiction since $Z\subseteq A'$. If $t(H) = 5$, then $H=J_1\oplus A(1)$, which again conflicts with $\dim(A') = 1$.
    \item[ii.] If $\dim(A') = 2$, then $t(H)\leq 3$. There is nothing for $t(H) = 1,2$. If $t(H)=0$, then $A$ is extra special, which conflicts with $\dim(A')=2$. The final possibility of the case $t(A) = 7$ is if $t(H) = 3$. Here, we know that $H=J_1$ and that $A$ is a central extension of $Z$ by $J_1$. As before, a basis for $A$ is $\{x,z,z'\}$ where $\{z'\}$ is a basis for $Z$ and $A$ has multiplications $xx = z$ and $xz = zx = z'$. So far, this $A$ works; we note that $\dim(A')=2$. We now compute the multiplier $M(A)$.

    Let $\{m_i\}_{i=1,\dots,9}$ be a generating set for $M(A)$ and consider the following multiplication table for the cover of $A$. \begin{align*}
        &xx = z+m_1 && xz = z'+m_2 && xz' = m_3 \\ &zx = z'+m_4 && zz = m_5 && zz' = m_6 \\ &z'x = m_7 && z'z = m_8 && z'z' = m_9
    \end{align*} By a change of variables, we allow $m_1 = m_2 = 0$. Moreover, we compute $m_4 = m_6 = m_8=m_9 = 0$ and $m_3 = m_5 = m_7$ via the associative identity. The multiplications on the cover are therefore \begin{align*}
        &xx = z && xz = z' && xz' = m \\ &zx = z' && zz = m && zz' = 0 \\ &z'x = m && z'z = 0 && z'z' = 0
    \end{align*} where $m$ denotes the single basis element of the multiplier. Thus, $t(A) = 3^2 - 1 = 8$. This $A$ does not fit $t(A)=7$, but the work of computing $t(A)$ will come in handy for the next case.
\end{enumerate}

\textbf{Case $t(A)=8$.} We continue in the same fashion with $8\geq m(m+1)$. The case of $Z(A)\not\subseteq A'$ yields a set of four possible structures on $A=I\oplus Z$; these occur when $t(I) = 6$ and are based on that case. They are $(J_1\ast J_1)\oplus A(1)$, $J_2\oplus A(1)$, $\Gamma_2\oplus A(1)$, and $H_2(\lambda)\oplus A(1)$. The case of $Z(A)\subseteq A'$, when $\dim(A') = 1$, yields $A=E(4)$. When $\dim(A') = 2$, we obtain the central extension of $A(1)$ by $J_1$ from the $t(A)=7$ discussion. This $A$ has basis $\{x,z,z'\}$, multiplications $xx=z$ and $xz = zx = z'$, and $t(A)=8$. We call this algebra $C_3$ since it is generated by one element and is thus \textit{cyclic} in the Leibniz-algebraic sense.

\textbf{Case $t(A) = 9$.} We still have $\dim(A') = 1$ or 2. The case of $Z(A)\not\subseteq A'$ yields $A=J_1\oplus A(3)$. We note that this is consistent with \begin{align*}
    \dim M(A) &= \dim M(J_1) + \dim M(A(3)) + 2\dim(J_1/J_1' \otimes A(3)) = 1 + 9 + 2(1)(3) = 16
\end{align*} since $t(A) = 25 - 16 = 9$. When $Z(A)\subseteq A'$, we obtain nothing but contradictions (including another path to $C_3$) except, possibly, for a central extension \[0\xrightarrow{} Z\xrightarrow{} A\xrightarrow{} J_1\oplus A(1)\xrightarrow{}0\] of $Z=A(1)$ by $J_1\oplus A(1)$. We must compute all such $A$ and their multipliers.

Let $\{x,z,a,z'\}$ be a basis for $A$ where $x$ and $z$ are the usual elements of $J_1$, $z'$ generates $Z$, and $a$ generates the other $A(1)$. Using change of bases and the associative identity, we compute a general structure on $A$ to be \begin{align*}
    & xx=z && xz = \beta z' && xa = \alpha_1z' \\ & zx = \beta z' && ax = \alpha_2z' && aa = \alpha_3 z'
\end{align*} where at least one of the $\alpha_i$'s must be nonzero since $a\not\in A'$ and $Z(A)\subseteq A'$, forcing $a$ to be noncentral. In every one of these cases, the multiplier is found to be 4-dimensional, and thus $t(A) = 12$ for any central extension of $A(1)$ by $J_1\oplus A(1)$. This concludes our discussion of the $t(A) = 9$ case, but we include the computation of the multiplier for the isomorphism class \[\langle x,z,a,z'~:~ xx = z,~ ax = z'\rangle\] as an example of this work.

We begin with a general multiplication table for the cover.
\begin{align*}
    &xx = z+m_{11} && xz = m_{12} && xa = m_{13} && xz' = m_{14} \\ & zx = m_{21} && zz = m_{22} && za = m_{23} && zz' = m_{24} \\ & ax = z' + m_{31} && az = m_{32} && aa = m_{33} && az' = m_{34} \\ & z'x = m_{41} && z'z = m_{42} && z'a = m_{43} && z'z' = m_{44}
\end{align*} By a change of basis, we allow $m_{11}=m_{31} = 0$. By the associative identity, we obtain $m_{12} = m_{21}$, $m_{32} = m_{41}$, and $m_{14} = m_{22} = m_{23} = m_{24} = m_{34} = m_{42} = m_{43} = m_{44} = 0$. Attempting to apply the associative identity to $m_{13}$ and $m_{33}$ goes nowhere. We have thereby found all linear relations among the generating elements of the multiplier, and so a basis is $\{m_{12}, m_{13}, m_{32}, m_{33}\}$.

\textbf{Case $t(A) = 10$.} The inequality $10\geq m(m+1)$ yields $\dim(A') = 1$ or 2. When $Z(A)\not\subseteq A'$, we obtain $A=E(3)\oplus A(2)$ and $E(4)\oplus A(1)$. In the case $Z(A)\subseteq A'$, we obtain $E(5)$ when $\dim(A') = 1$. When $\dim(A')=2$, everything is immediately contradictory with the possible exception of a central extension \[0\xrightarrow{} A(1)\xrightarrow{} A\xrightarrow{} E(3)\xrightarrow{} 0\] of $A(1)$ by any of $J_1\ast J_1$, $J_2$, $\Gamma_2$, or $H_2(\lambda)$. We must compute all such $A$ and their multipliers. There are many possibilities to consider.
\begin{enumerate}
    \item[i.] When $A(1)$ is extended by $J_1\ast J_1 = \langle x,y,z ~:~ xx = yy = z\rangle$, we can choose a basis for $A$ with the multiplication structure
\begin{align*}
    & xx = z && xy = \alpha_1 z' \\ &yx = \alpha_2 z' && yy = z + \alpha_3 z'
\end{align*} where $z'$ generates $A(1)$ and at least one $\alpha_i$ is nonzero since $z'\in Z(A)\subseteq A'$. In every case, the multiplier is 4-dimensional, and so $t(A)=12$.

\item[ii.] When $A(1)$ is extended by $J_2 = \langle x,y,z~:~ xy = z\rangle$, we can choose a basis for $A$ with only nonzero multiplications \begin{align*}
    & xx = \alpha_1 z' && xy = z \\ & yx = \alpha_2 z' && yy = \alpha_3 z'
\end{align*} where $z'$ generates $A(1)$ and at least one $\alpha_i$ is nonzero. Here, $t(A)=12$ in all cases with the exception of the isomorphism class \[\langle x,y,z,z'~:~ xx = z',~ xy=z,~yx=z'\rangle,\] for which $t(A) = 13$.

\item[iii.] When $A(1)$ is extended by $\Gamma_2$, we obtain $t(A)=12$ in every case.

\item[iv.] When $A(1)$ is extended by $H_2(\lambda)$, $0\neq \lambda\neq 1$, we again obtain $t(A)=12$ in all cases.
\end{enumerate}
This concludes our discussion of the $t(A) = 10$ case as well as the proof of the main result.
\end{proof}

\newpage

\end{document}